\numberwithin{equation}{section}
\newtheorem{theor}{Theorem}[section]
\newtheorem{prop}[theor]{Proposition}
\newcounter{other}            
\newtheorem{otherth}[other]{Theorem}              
\def\B{\mathcal{B}}
\def\D{\mathbb{D}}
\def\C{\mathbb{C}}
\def \T {\mathbb{T}}
\def \f {\frac}
\def \ind {\int_\D}
\def \H {\mathcal{H}}
\def \qp {\mathcal{Q}_p}
\def \Q {\mathcal Q}
\begin{document}

\title
{A problem for Hankel measures on  Hardy space}
\author{Guanlong Bao and Fangqin Ye}
\address{Guanlong Bao\\
Department of Mathematics\\
    Shantou University\\
    Shantou, Guangdong 515063, China}
\email{glbao@stu.edu.cn}

\address{Fangqin Ye\\
Business School\\
    Shantou University\\
    Shantou, Guangdong 515063, China}
\email{fqye@stu.edu.cn}

\thanks{The work  was   supported  by NNSF of China (No. 11720101003 and No. 11571217), Department of Education of Guangdong Province (No. 2017KQNCX078) and
STU Scientific Research Foundation for Talents (No. NTF17020 and No. STF17005).}
\subjclass[2010]{30H10, 47B35}
\keywords{Hankel measure; Hardy space; Hankel matrix.}

\begin{abstract}
In this note, we investigate  a condition related to the characterization of Hankel measures on Hardy space. We
address a problem    mentioned by J. Xiao in   2000.
\end{abstract}

\maketitle

\section{Introduction}

Let $\D$ be the open unit disk in the complex plan $\C$ and let  $H(\D)$ be the space of analytic functions in $\D$.
Recall that for $0<p<\infty$ the Hardy space  $H^p$  consists  of  functions  $f\in H(\D)$ such that
$$
\|f\|_{H^p}=\sup_{0<r<1} \left(\frac{1}{2\pi}\int_0^{2\pi}|f(re^{i\theta})|^p d\theta \right)^{1/p}<\infty.
$$
It is well known that every function $f\in H^p$ has non-tangential  limits $f(\zeta)$ for almost every $\zeta$ on the unit circle $\T$.
See  \cite{D} for the theory of Hardy spaces.

Let   $BMOA$ be the space of analytic functions of bounded mean oscillation. It is well known (cf. \cite{B, G}) that
the space $BMOA$ can be defined as the set of functions $f\in H(\D)$ satisfying
$$
\|f\|_{BMOA}^2=\sup_{a\in \D} \ind |f'(z)|^2 (1-|\sigma_a(z)|^2)dA(z)<\infty,
$$
where
$$
dA(z)=\frac{1}{\pi}dxdy=\frac{r}{\pi} drd\theta, \ \ z=x+iy=re^{i\theta},
$$
and
$$
\sigma_a(z)=\frac{a-z}{1-\overline{a}z}
$$
is the  M\"obius transformation of   $\D$ interchanging $a$ and $0$. The Fefferman-Stein duality theorem tells us that the dual space of $H^1$ is $BMOA$.  Also, $BMOA$ is a
proper subset of the Bloch space $\B$ consisting of functions
$f\in H(\D)$ for which
$$
\|f\|_{\B}=\sup_{z\in \D} (1-|z|^2)|f'(z)|<\infty.
$$

Given an arc $I$ of $\T$ with arclength $|I|$, the
 Carleson box  $S(I)$ is
$$
S(I)=\left\{r\zeta \in \D: 1-\f{|I|}{2\pi}<r<1, \ \zeta\in I\right\}.
$$
A complex measure $\mu$ on $\D$ is called a {\it Carleson measure} if
$$
\sup_{I\subseteq \T} \f{|\mu|(S(I))}{|I|}<\infty.
$$
It is well known (cf. \cite{D} ) that $\mu$ is a  Carleson measure if and only if there exists a positive constant $C$ such that
$$
\ind |f(z)|^2 d|\mu|(z)\leq C \|f\|^2_{H^2}
$$
for all $f\in H^2$.

Following J. Xiao \cite{X1},  a complex measure $\mu$ on  $\mathbb{D}$ is said to be a {\it Hankel measure } if there exists a positive constant $C$ such that
\begin{equation} \label{Hankel measure}
\left|\int_\D f^2(z) d\mu(z) \right|\leq C \|f\|^2_{H^2}
\end{equation}
for all $f$ in $H^2$. It is clear  that if $\mu$ is a Hankel measure, then $|\mu(\D)|<\infty$. The name of Hankel measure has its root in  the study of Hankel matrices (see also  \cite{JP, W}).
Clearly, a Carleson measure must be a Hankel measure.
 Answering a  problem posed by J. Peetre, J. Xiao \cite{X1} gave a series of complete characterizations of Hankel measures  involving Carleson measures, the duality between $H^1$ and $BMOA$, and Hankel operators.

We refer to   \cite[p. 11]{ARSW} and \cite{X2} for the study of complex measure  inequalities  similar to (\ref{Hankel measure}) in the setting of  the classical  Dirichlet space and weighted Bergman spaces, respectively.

From  \cite{X1},  a necessary condition for a complex measure $\mu$ on $\D$ to be a Hankel measure is
\begin{equation} \label{a condition}
\sup_{w\in \D} \left|\ind \frac{1-|w|^2}{(1-\overline{w}z)^2}d\mu(z)\right|<\infty.
\end{equation}
By this observation, J. Xiao \cite[p. 139]{X1}  mentioned  the following problem: Is condition (\ref{a condition}) sufficient for $\mu$ to be a Hankel measure too?
In this note, we show that in general the answer to this problem  is negative.   We give  a complex measure $\mu$ on $\D$ satisfying   condition (\ref{a condition}) but $\mu$ is not a Hankel measure.
A positive Borel measure $\mu$ on [0, 1) can be seen as a Borel measure on $\D$ by identifying it
with the measure $\tilde{\mu}$ defined by
$$
\tilde{\mu}(E)=\mu(E \cap [0, 1)),
$$
for any Borel subset $E$ of $\D$.
If $\mu$ is a positive Borel  measure on [0, 1),  we obtain that  condition (\ref{a condition}) holds if and only if  $\mu$ is  a Hankel measure, which also implies   some new information of a Hankel
matrix acting on  Hardy spaces and Dirichlet type spaces.

\section{Condition (\ref{a condition}) and Hankel measures}

The section is devoted to consider  whether   condition (\ref{a condition}) is  an equivalent description of Hankel measure.
For a complex measure $\mu$ on $\D$, the function $P_\mu$ is given by
$$
P_{\mu}(w)=\ind \f{1}{1-w\overline{z}}d\mu(z), \ \ w\in \D.
$$
We will show that the  problem considered in this note  is equivalent to investigate  those functions $P_{\overline{\mu}} \in \B$  must actually satisfying $P_{\overline{\mu}} \in BMOA$.

There exist several  complete descriptions of Hankel measures in \cite{X1}. Now we cite only  some of them as follows.

\begin{otherth}\label{Xiao's theorem}
Let $\mu$ be a complex measure on $\D$. Then the following conditions are equivalent.
\begin{enumerate}
  \item [(1)] $\mu$ is a Hankel measure.
  \item [(2)] There exists a positive constant $C$ such that
   $$
  \left|\int_\D f(z) d\mu(z) \right|\leq C \|f\|_{H^1},
  $$
  for all $f\in H^1$.
  \item [(3)] $P_{\overline{\mu}}$ is in $BMOA$.
  \item [(4)]
  $$
  \sup_{I\subseteq \T} \frac{1}{|I|} \int_{S(I)} \left|\ind\f{\overline{z}d\overline{\mu}(z)}{(1-w\overline{z})^2}\right|^2 (1-|w|^2)dA(w)<\infty.
  $$
\end{enumerate}
\end{otherth}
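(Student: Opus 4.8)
The plan is to establish the three equivalences $(1)\Leftrightarrow(2)$, $(2)\Leftrightarrow(3)$ and $(3)\Leftrightarrow(4)$. A few preliminary remarks fix the objects involved: a complex measure on $\D$ has finite total variation, so the moments $\widehat{\mu}(n):=\int_\D z^n\,d\mu(z)$ are bounded and $P_{\overline{\mu}}$ is a well-defined analytic function on $\D$; for $f\in H^1$ the symbol $\int_\D f\,d\mu$ is interpreted via $\lim_{r\to1^-}\int_\D f(rz)\,d\mu(z)$, which on a polynomial $p$ is literally the finite sum $\sum_n\widehat{p}(n)\widehat{\mu}(n)$, and whose existence in the cases we need follows from a routine $\varepsilon/3$ approximation by polynomials in $H^1$.

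First, for $(1)\Leftrightarrow(2)$: the direction $(2)\Rightarrow(1)$ is immediate, since $f^2\in H^1$ with $\|f^2\|_{H^1}=\|f\|_{H^2}^2$. For $(1)\Rightarrow(2)$ the key input is the norm-balanced factorization of $H^1$: every $g\in H^1$ can be written $g=g_1g_2$ with $g_1,g_2\in H^2$ and $\|g_1\|_{H^2}^2=\|g_2\|_{H^2}^2=\|g\|_{H^1}$, obtained by taking $g_2$ to be the outer function with boundary modulus $|g|^{1/2}$ and $g_1=g/g_2$ (which is inner times outer, hence in $H^2$). The polarization identity $g_1g_2=\tfrac14\big[(g_1+g_2)^2-(g_1-g_2)^2\big]$ together with the parallelogram law then reduces an estimate for $\int_\D g\,d\mu$ to the hypothesis $(1)$ applied to $g_1(r\cdot)\pm g_2(r\cdot)\in H^\infty$: one gets $\big|\int_\D g(rz)\,d\mu(z)\big|\le\tfrac{C}{2}\big(\|g_1\|_{H^2}^2+\|g_2\|_{H^2}^2\big)=C\|g\|_{H^1}$ uniformly in $r$, and $r\to1$ gives $(2)$.

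Next, for $(2)\Leftrightarrow(3)$: expanding the Cauchy kernel gives $P_{\overline{\mu}}(w)=\sum_{n\ge0}\overline{\widehat{\mu}(n)}\,w^n$, so $\widehat{P_{\overline{\mu}}}(n)=\overline{\widehat{\mu}(n)}$. Hence, under the Fefferman--Stein pairing $\langle f,h\rangle=\sum_n\widehat{f}(n)\overline{\widehat{h}(n)}$ that realizes $(H^1)^\ast=BMOA$, one has $\langle f,P_{\overline{\mu}}\rangle=\sum_n\widehat{f}(n)\widehat{\mu}(n)=\int_\D f\,d\mu$; that is, the functional $f\mapsto\int_\D f\,d\mu$ on $H^1$ is precisely integration against the symbol $P_{\overline{\mu}}$. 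By the duality theorem this functional is bounded---which is exactly condition $(2)$---if and only if $P_{\overline{\mu}}\in BMOA$, with comparable norms; testing on $f=z^n$ identifies $P_{\overline{\mu}}$ as the unique representing symbol, so the identification is unambiguous. Finally, for $(3)\Leftrightarrow(4)$: differentiation under the integral sign gives $P_{\overline{\mu}}'(w)=\int_\D\overline{z}\,(1-w\overline{z})^{-2}\,d\overline{\mu}(z)$, so condition $(4)$ says exactly that $|P_{\overline{\mu}}'(w)|^2(1-|w|^2)\,dA(w)$ is a Carleson measure; by the classical Carleson-measure description of $BMOA$ (equivalent to the M\"obius-invariant definition recalled in the Introduction, via the standard reproducing-kernel reformulation of the Carleson condition) this holds if and only if $P_{\overline{\mu}}\in BMOA$.

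I expect the one genuinely non-routine step to be $(1)\Rightarrow(2)$: upgrading control of the quadratic forms $\int_\D f^2\,d\mu$ to control of the linear forms $\int_\D f\,d\mu$ is not formal and rests on the strong $H^1=H^2\cdot H^2$ factorization with matched norms together with polarization. The remaining implications are essentially bookkeeping with the $H^1$--$BMOA$ duality and with the standard Carleson-measure characterization of $BMOA$. A secondary technical point---present only because $\mu$ is a general complex, not positive, measure---is to justify that $\int_\D f\,d\mu$ and the limits appearing above make sense on all of $H^1$ and not merely on polynomials; this is handled by the dilation and density remarks above.
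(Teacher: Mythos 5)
This statement is Theorem~\ref{Xiao's theorem}, which the paper does not prove at all: it is quoted verbatim from Xiao's paper \cite{X1}, so there is no in-paper proof to compare against. Your argument is correct and follows essentially the same standard route as the original source: $(2)\Rightarrow(1)$ is trivial, $(1)\Rightarrow(2)$ uses the norm-balanced factorization $H^1=H^2\cdot H^2$ together with polarization and the parallelogram law (applied to dilations so that all integrals converge absolutely against the finite complex measure), $(2)\Leftrightarrow(3)$ is the Fefferman--Stein duality with the coefficient identification $\widehat{P_{\overline{\mu}}}(n)=\overline{\widehat{\mu}(n)}$, and $(3)\Leftrightarrow(4)$ is the classical Carleson-measure characterization of $BMOA$ applied to $P_{\overline{\mu}}'$. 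Your attention to the interpretation of $\int_\D f\,d\mu$ for general $f\in H^1$ via dilation limits and density of polynomials is exactly the technical care this statement requires for a complex (rather than positive) measure.
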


The following theorem characterizes lacunary series in the Bloch space $\B$ and $BMOA$ (see  \cite{G} and \cite{P}).

\begin{otherth}\label{gap series}
Let $f\in H(\D)$ with the power series expansion   $f(z)=\sum_{k=1}^\infty a_k z^{n_k}$  and suppose there exists
$\lambda >1$ such that $n_{k+1}\geq \lambda n_k$ for all $k$. Then the following assertions hold.
\begin{enumerate}
  \item [(1)] $f\in \B$ if and only if the sequence $\{a_k\}$ is bounded.
  \item [(2)] $f\in BMOA$ if and only if
  $$
  \sum_{k=1}^\infty |a_k|^2<\infty.
  $$
\end{enumerate}
\end{otherth}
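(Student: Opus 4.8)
The plan is to prove each equivalence by separating an ``only if'' half, valid for \emph{every} member of the space, from an ``if'' half that uses the gap condition $n_{k+1}\ge\lambda n_k$. For (1), the ``only if'' part needs no lacunarity: if $f(z)=\sum_{n\ge0}c_nz^n\in\B$, then $|f'(z)|\le\|f\|_{\B}/(1-r^2)$ on $|z|=r$, so the Cauchy estimate gives $n|c_n|r^{n-1}\le\|f\|_{\B}/(1-r^2)$, and choosing $r^2=1-1/n$ yields $|c_n|\le C\|f\|_{\B}$; in particular $\sup_k|a_k|<\infty$. For the ``if'' part, suppose $|a_k|\le M$. The one nonroutine ingredient is the estimate
$$
\sum_{k=1}^{\infty}n_kr^{n_k}\le\frac{C_\lambda}{1-r},\qquad 0<r<1,
$$
which I would get by splitting the sum at $n_k\approx(1-r)^{-1}$: the terms with $n_k\le(1-r)^{-1}$ form a geometric-type sum (by lacunarity) bounded by a multiple of $(1-r)^{-1}$, while the terms with $n_k>(1-r)^{-1}$ are handled via $n_kr^{n_k}\le(1-r)^{-1}s_ke^{-s_k}$ with $s_k=n_k(1-r)>1$ lacunary, so that $\sum s_ke^{-s_k}\le C_\lambda'$. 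Then $(1-|z|^2)|f'(z)|\le2(1-r)M\sum_kn_kr^{n_k-1}\le2C_\lambda M/r$, which is bounded for $r$ away from $0$ and trivially bounded near $0$; hence $f\in\B$.

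For (2), the ``only if'' direction is immediate from $BMOA\subseteq H^2$: $\sum_k|a_k|^2\le\sum_{n\ge0}|\widehat f(n)|^2=\|f\|_{H^2}^2<\infty$. The substance is the converse, that $\sum_k|a_k|^2<\infty$ forces $f\in BMOA$, and I would verify the arc-oscillation description $\sup_I|I|^{-1}\int_I|f-f_I|^2\,|d\zeta|<\infty$ (equivalent to the Carleson-measure/Garsia description recorded in the introduction, cf.\ \cite{B,G}). Fix an arc $I$ with centre $\zeta_0$ and $|I|\approx1/N$, and decompose $f=f_L+f_H$ with $f_L=\sum_{n_k<N}a_kz^{n_k}$ and $f_H=\sum_{n_k\ge N}a_kz^{n_k}$. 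For the low frequencies, estimate the oscillation pointwise on $I$: for $\zeta\in I$,
$$
|f_L(\zeta)-f_L(\zeta_0)|\le CN^{-1}\sum_{n_k<N}|a_k|\,n_k\le CN^{-1}\Big(\sum_k|a_k|^2\Big)^{1/2}\Big(\sum_{n_k<N}n_k^2\Big)^{1/2}\le C_\lambda\Big(\sum_k|a_k|^2\Big)^{1/2},
$$
using $\sum_{n_k<N}n_k^2\le C_\lambda N^2$ by lacunarity, so $|I|^{-1}\int_I|f_L-f_L(\zeta_0)|^2\le C_\lambda\sum_k|a_k|^2$. For the high frequencies it is enough to bound $|I|^{-1}\int_I|f_H|^2\,|d\zeta|$; choosing a fixed smooth bump $\chi\ge0$ with $\chi\ge1$ on $I$, $\operatorname{supp}\chi\subset2I$, and $|\widehat\chi(m)|\le C_MN^{-1}(1+|m|/N)^{-M}$, one gets $\int_I|f_H|^2\le\int_{\T}|f_H|^2\chi=2\pi\sum_{k,l}a_k\overline{a_l}\,\widehat\chi(n_l-n_k)$; since $l\ne k$ and $n_k,n_l\ge N$ force $|n_l-n_k|\ge(1-\lambda^{-1})\max(n_k,n_l)$, a Schur-type estimate gives $\sum_{l\ne k}|\widehat\chi(n_l-n_k)|\le C_{M,\lambda}N^{-1}$ uniformly in $k$, whence $|I|^{-1}\int_I|f_H|^2\le C_{M,\lambda}\sum_k|a_k|^2$. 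Adding the two contributions gives $\|f\|_{BMOA}^2\le C_\lambda\sum_k|a_k|^2$.

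The main obstacle is the high-frequency estimate in (2): unlike the global norm $\|f_H\|_{H^2}^2$, the quantity $|I|^{-1}\int_I|f_H|^2$ is only a partial integral over an arc, so Parseval does not apply directly; introducing the bump $\chi$ converts it into an absolutely convergent double sum over pairs of frequencies, and the lacunarity of $\{n_k:n_k\ge N\}$ is precisely what spreads out the differences $n_l-n_k$ enough to run the Schur argument. A softer alternative for this direction, which I would also mention, is to combine the Fefferman--Stein $H^1$--$BMOA$ duality (Theorem~\ref{Xiao's theorem}) with Paley's lacunary inequality $\sum_k|\widehat g(n_k)|^2\le C\|g\|_{H^1}^2$: then for $g\in H^1$, $|\langle f,g\rangle|=\big|\sum_ka_k\overline{\widehat g(n_k)}\big|\le\big(\sum_k|a_k|^2\big)^{1/2}\big(\sum_k|\widehat g(n_k)|^2\big)^{1/2}\le C\big(\sum_k|a_k|^2\big)^{1/2}\|g\|_{H^1}$, so $f$ acts as a bounded functional on $H^1$ and hence lies in $BMOA$ --- at the cost of shifting the work into Paley's theorem.
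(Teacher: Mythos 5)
Theorem~\ref{gap series} is a quoted classical result: the paper gives no proof and simply cites Pommerenke \cite{P} and Girela's survey \cite{G}, so there is no in-paper argument to compare against line by line. Your proposal is, as far as I can check, a correct self-contained proof, and it is worth recording how it differs from the standard routes. Part (1) is the usual argument: Cauchy estimates give bounded coefficients for every Bloch function, and your splitting of $\sum_k n_k r^{n_k}$ at $n_k\approx (1-r)^{-1}$ (geometric count below, $s_k e^{-s_k}$ with $s_k=n_k(1-r)$ lacunary above) correctly yields $\sum_k n_k r^{n_k}\le C_\lambda/(1-r)$ and hence the converse. For part (2), the inclusion $BMOA\subset H^2$ disposes of the ``only if'' half, and both of your arguments for the ``if'' half are sound. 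Your second argument (Fefferman--Stein duality plus Paley's inequality $\sum_k|\widehat g(n_k)|^2\le C\|g\|_{H^1}^2$) is essentially the classical proof found in the cited literature, with the work shifted into Paley's theorem, exactly as you say. Your primary argument is genuinely different in flavor: it verifies the boundary BMO oscillation condition directly, splitting $f$ at frequency $N\approx |I|^{-1}$, using a mean-value estimate plus $\sum_{n_k<N}n_k^2\le C_\lambda N^2$ for the low frequencies, and a bump-function/almost-orthogonality estimate for the high frequencies; using $f_L(\zeta_0)$ as the comparison constant is legitimate since the mean minimizes the $L^2$ deviation, and the Schur bound $\sum_{l\ne k}|\widehat\chi(n_l-n_k)|\le C_{M,\lambda}N^{-1}$ does hold uniformly in $k$ because $|n_l-n_k|\ge(1-\lambda^{-1})n_l$ and the $n_l\ge N$ grow at least geometrically, so the off-diagonal terms are summable --- this one-line justification should be spelled out, but it is true. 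What the real-variable route buys is independence from $H^1$--$BMOA$ duality and Paley; what it costs is the boundary-value formalism and the bump construction. A third route, closer in spirit to the $\Q_p$ framework used elsewhere in this paper, is to show directly that $|f'(z)|^2(1-|z|^2)\,dA(z)$ is a Carleson measure by estimating $\int_{S(I)}$ with the lacunary structure; any of the three is acceptable here.
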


 In 1995, R. Aulaskari,  J. Xiao and  R. Zhao \cite{AXZ} introduced  $\qp$ spaces which  attracted a lot of attention in recent years.
 For
$0<p<\infty$,  the space $\qp$ consists of functions $f\in H(\D)$ with
$$
\|f\|_{\qp}^2 =\sup_{a\in\D}\,\int_{\D}
|f'(z)|^{2}\left(1-|\sigma_a(z)|^2\right)^pdA(z)<\infty.
$$
 Clearly, $\Q_1=BMOA$.
By \cite{AL}, we know that for  $1<p<\infty$, all $\qp$ spaces are the same and equal to the Bloch space $\B$. See J. Xiao's monographs  \cite{X3, X4} for more  results  of $\qp$ spaces.

The following result can be founded in \cite[p. 29]{X3}.

\begin{otherth}\label{nonnegative nonincreasing}
Suppose $0<p<\infty$.  Let $f(z)=\sum_{n=0}^\infty a_n z^n$ with $\{a_n\}$ being a decreasing sequence of nonnegative numbers. Then
$f\in \qp$ if and only if $a_n=O(\f{1}{n})$.
\end{otherth}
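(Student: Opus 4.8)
The plan is to prove the two implications separately; the necessity is elementary and the sufficiency carries the real content. Suppose first that $f\in\qp$. I would invoke the standard inclusion $\qp\subseteq\B$, valid for every $p\in(0,\infty)$: for $p>1$ this is the equality $\qp=\B$ of \cite{AL}, and in general it follows from the nesting $\qp\subseteq\Q_q$ whenever $p<q$ (take any $q>\max(p,1)$). Hence $M:=\|f\|_{\B}<\infty$. Now fix a large integer $N$ and put $r=1-1/N$. Since $a_n\geq 0$, the tail of $f'(r)=\sum_{n\geq 1}na_nr^{n-1}$ may be discarded, and since $\{a_n\}$ is nonincreasing, $a_n\geq a_N$ for $n\leq N$; using also that $(1-1/N)^{n-1}\geq 1/e$ for $1\leq n\leq N$, one gets
\[
f'(r)\ \geq\ a_N\sum_{n=1}^{N}n\,r^{n-1}\ \gtrsim\ a_N N^2 .
\]
Combined with $(1-r^2)|f'(r)|\leq M$ and $1-r^2\geq 1/N$, this gives $N a_N\lesssim M$, i.e.\ $a_N=O(1/N)$. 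Monotonicity is used essentially here and cannot be dropped: the Hadamard gap series $\sum_k z^{2^k}$ belongs to $\B$ by Theorem~\ref{gap series}, while its coefficients are not $O(1/n)$.

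For the sufficiency I would use the familiar Carleson-box description of $\qp$ (see, for instance, \cite{X3}): for $0<p<\infty$,
\[
f\in\qp\quad\Longleftrightarrow\quad \sup_{I\subseteq\T}\f{1}{|I|^p}\int_{S(I)}|f'(z)|^2(1-|z|^2)^p\,dA(z)<\infty .
\]
Set $C:=\sup_{n\geq 1}na_n<\infty$, fix an arc $I\subseteq\T$, and write $h=|I|/(2\pi)$, so that on $S(I)$ the radius is confined to $(1-h,1)$. The idea is to enlarge the angular integration to the whole circle and then use Parseval on each circle:
\[
\int_{S(I)}|f'(z)|^2(1-|z|^2)^p\,dA(z)\ \leq\ \f{1}{\pi}\int_{1-h}^{1}(1-r^2)^p\Big(\int_{0}^{2\pi}|f'(re^{i\theta})|^2\,d\theta\Big)dr ,
\]
while, since $0\leq na_n\leq C$,
\[
\int_{0}^{2\pi}|f'(re^{i\theta})|^2\,d\theta\ =\ 2\pi\sum_{n\geq 1}(na_n)^2 r^{2n-2}\ \leq\ 2\pi C^2\sum_{n\geq 1}r^{2n-2}\ =\ \f{2\pi C^2}{1-r^2}.
\]
Hence
\[
\int_{S(I)}|f'(z)|^2(1-|z|^2)^p\,dA(z)\ \leq\ 2C^2\int_{1-h}^{1}(1-r^2)^{p-1}\,dr\ \lesssim_p\ h^p\ \asymp\ |I|^p ,
\]
because $\int_{1-h}^{1}(1-r)^{p-1}\,dr=h^p/p$. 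The implied constant is independent of $I$, so the box condition holds and $f\in\qp$.

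The necessity is routine; the difficulty sits in the sufficiency, specifically in the range $0<p\leq 1$. The naive attempt — estimating $|f'(z)|$ pointwise by $f'(|z|)\leq C/(1-|z|)$, legitimate because $na_n\geq 0$ — gives $f\in\qp$ only when $p>1$; for $p\leq 1$ it leads to the divergent integral $\int_{1-h}^{1}(1-r)^{p-2}\,dr$. Attempting instead to prove the stronger bound $\int_{S(I)}|f'|^2\,dA=O(1)$ also fails, already for $\log\f{1}{1-z}$ on boxes anchored at the point $1$. The remedy carried out above is to \emph{keep} the weight $(1-|z|^2)^p$ and simultaneously exploit that on $S(I)$ the radius is restricted to $(1-h,1)$, which is exactly what makes $\int_{1-h}^{1}(1-r^2)^{p-1}\,dr$ finite and $\asymp|I|^p$; the hypothesis $a_n=O(1/n)$ is precisely what forces $\sum_{n}(na_n)^2 r^{2n-2}=O\big((1-r^2)^{-1}\big)$, and nothing smaller. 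It is worth noting that this argument uses only $a_n=O(1/n)$, not the monotonicity of $\{a_n\}$ — consistent with the fact, observed above, that monotonicity is needed solely for the necessity half.
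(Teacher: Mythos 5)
Your argument is correct, and there is in fact nothing in the paper to measure it against: the statement is quoted as Theorem~\ref{nonnegative nonincreasing} from \cite[p.~29]{X3}, and no proof is reproduced in this note. What you give is essentially the standard self-contained proof. The necessity half --- $\qp\subseteq\Q_q=\B$ for $p<q$, $q>1$, followed by the Abel-type estimate $f'(r)\ge a_N\sum_{n=1}^{N}nr^{n-1}\gtrsim a_N N^2$ at $r=1-1/N$, where nonnegativity and monotonicity enter --- is sound. The sufficiency half correctly invokes the $p$-Carleson-box description of $\qp$ (valid for all $0<p<\infty$) together with Parseval: $a_n=O(1/n)$ forces $\int_0^{2\pi}|f'(re^{i\theta})|^2\,d\theta=O\big((1-r)^{-1}\big)$, i.e.\ $f$ lies in the mean Lipschitz space $\Lambda^2_{1/2}$, and the radial localization of $S(I)$ then gives the box bound through $\int_{1-h}^{1}(1-r^2)^{p-1}\,dr\asymp_p h^p$; your observation that this half uses only $a_n=O(1/n)$ and not monotonicity recovers the known inclusion $\Lambda^2_{1/2}\subset\qp$ for every $p>0$, so the division of labor you describe (monotonicity needed only for necessity) is exactly right. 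One peripheral quibble: the example $\sum_k z^{2^k}$ witnesses the indispensability of monotonicity only in the range $p>1$, since that function does not belong to $\qp$ for $p\le 1$; for $p=1$ one can instead take $\sum_k k^{-1}z^{2^k}$, which lies in $BMOA=\Q_1$ by Theorem~\ref{gap series} yet has coefficients not $O(1/n)$, with similar lacunary variants for $p<1$. This remark is side commentary and does not affect the validity of your proof.
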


We first show that the problem considered  in this note is associated with  certain  self-improving property of functions $P_{\overline{\mu}}$.
\begin{theor}\label{th}
Let $\mu$ be a complex measure on $\D$. Then the following two statements  are equivalent.
\begin{enumerate}
  \item [(1)] If $\mu$ satisfies condition (\ref{a condition}), then $\mu$ is  a Hankel measure.
  \item [(2)] If $P_{\overline{\mu}} \in \B$, then $P_{\overline{\mu}} \in BMOA$.
\end{enumerate}
\end{theor}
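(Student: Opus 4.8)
The plan is to reduce both statements to one and the same implication by first establishing the equivalence that $\mu$ satisfies condition (\ref{a condition}) if and only if $P_{\overline{\mu}}\in\B$. Once that is in hand, I invoke part (3) of Theorem \ref{Xiao's theorem}, according to which $\mu$ is a Hankel measure precisely when $P_{\overline{\mu}}\in BMOA$. Then statement (1) is equivalent to the implication ``$P_{\overline{\mu}}\in\B\Longrightarrow P_{\overline{\mu}}\in BMOA$'', which is exactly statement (2); hence (1) and (2) are equivalent and the theorem follows. Thus the whole proof rests on the single equivalence above.

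To prove it, I would first rewrite condition (\ref{a condition}). Since $1-|w|^2$ is real and positive and
\[
\overline{\ \ind\f{1}{(1-\overline{w}z)^2}\,d\mu(z)\ }=\ind\f{1}{(1-w\overline{z})^2}\,d\overline{\mu}(z)=:Q(w),
\]
with $Q\in H(\D)$ (the kernel being bounded on $\D$ for each fixed $w$), condition (\ref{a condition}) is equivalent to $\sup_{w\in\D}(1-|w|^2)|Q(w)|<\infty$. Differentiation under the integral sign then yields the product-rule identity
\[
\bigl(wP_{\overline{\mu}}(w)\bigr)'=\f{d}{dw}\ind\f{w}{1-w\overline{z}}\,d\overline{\mu}(z)=\ind\f{1}{(1-w\overline{z})^2}\,d\overline{\mu}(z)=Q(w),
\]
so $Q$ is exactly the derivative of the analytic function $g(w):=wP_{\overline{\mu}}(w)$, and condition (\ref{a condition}) says precisely that $g\in\B$.

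Finally I would verify the elementary fact that $g(w)=wP_{\overline{\mu}}(w)\in\B$ if and only if $P_{\overline{\mu}}\in\B$; more generally, for any $f\in H(\D)$ one has $zf(z)\in\B\iff f\in\B$. The forward direction uses the standard Bloch growth bound $(1-|z|^2)|f(z)|=O(1)$ (because $|f(z)|=O(\log\f{1}{1-|z|^2})$), which makes $(1-|z|^2)\bigl|f(z)+zf'(z)\bigr|$ bounded; the converse is immediate on $\{|z|\ge\tfrac12\}$ from $f(z)=z^{-1}g(z)$ (note $g(0)=0$) together with analyticity of $f$ near the origin. Combining the three steps gives the desired equivalence, and with it the theorem. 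I do not expect a serious obstacle here: the one point needing attention is that condition (\ref{a condition}) does not visibly carry the factor $\overline{z}$ that appears in $P_{\overline{\mu}}'(w)=\ind\overline{z}\,(1-w\overline{z})^{-2}\,d\overline{\mu}(z)$, and the observation making the argument work is that this factor is exactly what is supplied by the product-rule identity $Q=\bigl(wP_{\overline{\mu}}\bigr)'$; everything else is bookkeeping with conjugates plus the citation of Theorem \ref{Xiao's theorem}.
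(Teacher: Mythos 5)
Your proposal is correct and follows essentially the same route as the paper: both reduce the theorem to showing that condition (\ref{a condition}) holds if and only if $P_{\overline{\mu}}\in\B$, by identifying the integral in (\ref{a condition}) (up to conjugation) with $(1-|w|^2)\bigl|\bigl(wP_{\overline{\mu}}(w)\bigr)'\bigr|$ and then removing the factor $w$ via the logarithmic growth estimate for Bloch functions, before invoking Theorem \ref{Xiao's theorem}(3). The only cosmetic difference is that you compute $\bigl(wP_{\overline{\mu}}\bigr)'$ by differentiating under the integral sign, whereas the paper expands in a power series; this changes nothing of substance.
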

\begin{proof}
For $w\in \D$, one gets
\begin{eqnarray*}
wP_{\overline{\mu}}(w)&=&\ind \f{w}{1-w\overline{z}}d\overline{\mu}(z)\\
&=& \sum_{n=0}^\infty \left(\ind \overline{z}^n d\overline{\mu}(z)\right) w^{n+1}.
\end{eqnarray*}
Then
\begin{eqnarray*}
&~& \sup_{w\in \D} (1-|w|^2) |\left(wP_{\overline{\mu}}(w)\right)'|\\
&=&\sup_{w\in \D} (1-|w|^2) \left|\sum_{n=0}^\infty (n+1) \left(\ind \overline{z}^n d\overline{\mu}(z)\right) w^{n}\right| \\
&=& \sup_{w\in \D} \left|\ind \frac{1-|w|^2}{(1-w\overline{z})^2} d\overline{\mu} (z)\right|\\
&=& \sup_{w\in \D} \left|\ind \frac{1-|w|^2}{(1-\overline{w}z)^2} d\mu (z)\right|.
\end{eqnarray*}
Consequently, the function $ w\rightarrow wP_{\overline{\mu}}(w)$ belongs to $\B$ if and only if condition (\ref{a condition}) holds for $\mu$.

By the growth estimates of functions in $\B$ (see \cite[p. 6]{X3} for example), one gets that there exists a positive $C$
such that
$$
|f(z)-f(0)|\leq C \|f\|_\B \log\f{1}{1-|z|}
$$
for all $f\in \B$.  Bearing  in mind this estimate, we see that   $P_{\overline{\mu}} \in \B$ if and only if the function $ w\rightarrow wP_{\overline{\mu}}(w)$ belongs to $\B$.

Thus we proved that $P_{\overline{\mu}} \in \B$ if and only if $\mu$ satisfies condition (\ref{a condition}). By Theorem \ref{Xiao's theorem},
$\mu$ is a Hankel measure if and  only if $P_{\overline{\mu}} \in BMOA$. These imply the desired result.
\end{proof}

The following result shows   that in general the answer to J. Xiao's problem is negative.
\begin{prop}\label{negative}
There exists a complex measure $\mu$ on $\D$ such that condition (\ref{a condition}) holds but $\mu$ is not a Hankel measure.
\end{prop}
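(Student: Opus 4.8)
The plan is to combine Theorem~\ref{th} with Theorem~\ref{Xiao's theorem}: by the proof of Theorem~\ref{th}, condition~(\ref{a condition}) holds for $\mu$ exactly when $P_{\overline{\mu}}\in\B$, and by Theorem~\ref{Xiao's theorem}, $\mu$ is a Hankel measure exactly when $P_{\overline{\mu}}\in BMOA$; so it suffices to produce a finite complex measure $\mu$ on $\D$ with $P_{\overline{\mu}}\in\B\setminus BMOA$. Since $P_{\overline{\mu}}(w)=\sum_{n\ge0}\overline{\bigl(\int_\D z^n\,d\mu(z)\bigr)}\,w^n$, what I want is a $\mu$ whose conjugated power moments are the Taylor coefficients of a Hadamard--gap series $\sum_{k\ge1}\beta_k w^{3^k}$ with $\{\beta_k\}$ bounded but $\sum_k\beta_k^2=\infty$: by Theorem~\ref{gap series}(1) such a series lies in $\B$, and by Theorem~\ref{gap series}(2) it is not in $BMOA$.

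First I would realise this moment pattern on the circle $\T$. Because $E=\{3^k:k\ge1\}$ is a Sidon set (for instance via Riesz products), one has $\sum_{n\in E}|\widehat{p}(n)|\le C\|p\|_{H^\infty}$ for every analytic polynomial $p$; hence the functional $p\mapsto\sum_k\widehat{p}(3^k)$ is bounded in the supremum norm on analytic polynomials. Extending it by Hahn--Banach to $C(\T)$ and applying the Riesz representation theorem produces a finite complex measure $\nu$ on $\T$ with $\int_\T z^n\,d\nu(z)=\mathbbm{1}_E(n)$ for all $n\ge0$. (This $\nu$ is genuinely complex rather than positive, since $\nu(\T)=\int_\T 1\,d\nu=0$ while $\nu\ne0$.)

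Next I would push $\nu$ into the open disc by averaging its radial dilations, using weights that decay slowly enough to keep $\sum_k\beta_k^2$ divergent. Set $r_j=1-3^{-j}$ and $c_j=j^{-1/2}-(j+1)^{-1/2}>0$, so that $\sum_{j\ge1}c_j=1$, and let $\mu=\sum_{j\ge1}c_j\,(D_{r_j})_\ast\nu$, where $(D_{r})_\ast\nu$ denotes the push-forward of $\nu$ under $\zeta\mapsto r\zeta$. Then $\mu$ is a finite complex measure supported in $\{z:\tfrac23\le|z|<1\}\subset\D$ with $\|\mu\|\le\|\nu\|$, and for $n\ge0$,
$$\int_\D z^n\,d\mu(z)=\sum_{j\ge1}c_j\,r_j^{\,n}\!\int_\T\zeta^n\,d\nu(\zeta)=\mathbbm{1}_E(n)\,\beta_n,\qquad \beta_n:=\sum_{j\ge1}c_j r_j^{\,n}\in(0,1].$$
For $n=3^k$ one has $\beta_{3^k}\ge\sum_{j\ge k}c_j r_j^{\,3^k}\ge (1-3^{-k})^{3^k}\sum_{j\ge k}c_j=(1-3^{-k})^{3^k}\,k^{-1/2}\ge\tfrac{8}{27}k^{-1/2}$, so $\{\beta_{3^k}\}_k$ is bounded while $\sum_k\beta_{3^k}^2=\infty$. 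Therefore $P_{\overline{\mu}}(w)=\sum_{k\ge1}\beta_{3^k}w^{3^k}$ belongs to $\B\setminus BMOA$ by Theorem~\ref{gap series}, and consequently $\mu$ satisfies~(\ref{a condition}) (by the proof of Theorem~\ref{th}) but is not a Hankel measure (by Theorem~\ref{Xiao's theorem}), which is the assertion of the proposition.

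The step I expect to be the crux is the construction of $\nu$ --- that is, realising a lacunary spectrum by a finite measure at all. The two most natural attempts both fail: a measure whose \emph{whole} spectrum lies in a lacunary set is, by the F. and M. Riesz theorem, an $L^1_E$ function, hence square-summable (and would yield a $BMOA$ function); and a measure carried by a sequence of circles $|z|=r_k$ with $\int z^{n_k}\,d\mu\asymp a_k$ must have total variation $\gtrsim\sum_k|a_k|$, which diverges whenever $a_k\to0$ and $\sum_k a_k^2=\infty$. What makes the argument go through is that a complex measure on $\T$ may carry a large co-analytic part invisible to the moments $\int z^n\,d\mu$ ($n\ge0$); the Sidon/Hahn--Banach step exploits exactly this, and the dilation average with the slowly decaying weights $c_j$ is then needed to move into the open disc without losing $\sum_k\beta_{3^k}^2=\infty$. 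In particular the resulting $\mu$ is genuinely complex, consistently with the later observation in the paper that a positive measure on $[0,1)$ cannot serve as a counterexample.
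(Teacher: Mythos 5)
The step you yourself single out as the crux --- manufacturing the measure $\nu$ on $\T$ with $\int_\T \zeta^n\,d\nu(\zeta)=1$ for $n\in E=\{3^k\}$ and $=0$ for all other $n\ge 0$ --- is exactly where the argument fails, and it cannot be repaired. Sidonicity of $E$ gives the inequality $\sum_{n\in E}|\widehat{p}(n)|\le C\|p\|_\infty$ only for polynomials whose spectrum lies in $E$ (equivalently, it interpolates any bounded sequence on $E$ by the Fourier--Stieltjes coefficients of some measure, with \emph{no control of the coefficients off} $E$); it does not give that inequality for arbitrary analytic polynomials, and for arbitrary analytic polynomials it is false. One sees this through your own Hahn--Banach step: if the functional $p\mapsto\sum_k\widehat{p}(3^k)$ were bounded on the disk algebra, the resulting $\nu$ would satisfy $\int_\T (1-w\zeta)^{-1}d\nu(\zeta)=\sum_k w^{3^k}$, so the gap series $\sum_k w^{3^k}$ would be the Cauchy--Stieltjes transform of a finite measure and hence would belong to $H^p$ for every $0<p<1$; but a Hadamard gap series lies in some (equivalently every) $H^p$, $0<p<\infty$, precisely when its coefficients are square-summable, since $M_p(r,\cdot)\asymp M_2(r,\cdot)$ for gap series. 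For general analytic polynomials the correct substitute is Paley's inequality, which bounds $\bigl(\sum_k|\widehat{p}(3^k)|^2\bigr)^{1/2}$ by $\|p\|_{H^1}$ --- an $\ell^2$ bound, not the $\ell^1$ bound you need --- and this is precisely why the functional is unbounded.

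Moreover the obstruction is not confined to the intermediate measure $\nu$: it kills the whole plan, independently of the weights $c_j$ and radii $r_j$. For \emph{any} finite complex measure $\mu$ on $\D$ one has $P_{\overline{\mu}}\in H^p$ for all $0<p<1$: write $(1-w\overline{z})^{-1}=\tfrac12\bigl[(1+w\overline{z})(1-w\overline{z})^{-1}+1\bigr]$, note the first term has nonnegative real part when $|z|\le 1$, and decompose $\mu$ into four positive measures. Hence $P_{\overline{\mu}}$ can never equal a gap series $\sum_k\beta_k w^{3^k}$ with $\{\beta_k\}$ bounded and $\sum_k\beta_k^2=\infty$, so the final measure you aim for does not exist any more than $\nu$ does. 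For comparison, the paper proceeds quite differently: it takes $d\overline{\mu}=f\,dA$ with the lacunary density $f(z)=1+\sum_k(1+2^k)z^{2^k}$ and computes the moments directly, obtaining $P_{\overline{\mu}}(w)=1+\sum_k w^{2^k}\in\B\setminus BMOA$. Your instinct that the delicate point is realizing such moments by a genuinely finite measure is, however, well placed: for the paper's choice one must still check $f\in L^1(\D,dA)$, and the same gap-series estimates give $M_1(r,f)\asymp M_2(r,f)\asymp(1-r)^{-1}$, so that integrability (equivalently, finiteness of the total variation of $f\,dA$) is itself problematic --- indeed the $H^p$ obstruction above applies verbatim to $1+\sum_k w^{2^k}$. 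So the difficulty you identified is real, but your resolution of it does not stand.
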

\begin{proof}
Set $d\overline{\mu}(z)=  f(z)dA(z)$ for $z\in \D$, where $f(z)=1+\sum_{k=1}^\infty (1+2^{k})z^{2^k}$. Then $\overline{\mu}(\D)=1$.  By the
Parseval formula, for any positive  integer $n$,
\begin{eqnarray*}
\ind \overline{z}^n d\overline{\mu}(z)&=&\frac{1}{\pi}\int_0^1 r^{n+1}dr \int_0^{2\pi} e^{-in\theta} [1+\sum_{k=1}^\infty (1+2^{k})r^{2^k}e^{i 2^k \theta}]d\theta\\
&=&
\begin{cases}
& \enspace  0, \ \ \text{if} \ \  n \not\in \{2^k: k=1, 2, 3, \cdots\},     \\
& \enspace  1, \ \ \text{if} \ \ n \in \{2^k: k=1, 2, 3, \cdots\}.
 \end{cases}
\end{eqnarray*}
Hence, for any $w\in \D$,
\begin{eqnarray*}
P_{\overline{\mu}}(w)&=&\sum_{n=0}^\infty \left(\ind \overline{z}^n d\overline{\mu}(z)\right) w^{n}\\
&=& 1+  \sum_{k=1}^\infty w^{2^k}.
\end{eqnarray*}
By Theorem \ref{gap series}, $P_{\overline{\mu}} \in \B$ but $P_{\overline{\mu}} \not\in BMOA$. Via Theorem \ref{Xiao's theorem} and the proof of Theorem \ref{th}, we see that
the measure $\mu$ satisfies condition (\ref{a condition}), but $\mu$ is not a Hankel measure.
\end{proof}

For a positive Borel measure $\mu$  on [0, 1) and a nonnegative integer
$n$,  denote by  $\mu[n]$  the moment of order $n$ of $\mu$, that is,
$$
\mu[n]=\int_0^1 t^n d\mu(t).
$$
The following result shows that there exist measures $\mu$ such that if $P_{\overline{\mu}} \in \B$, then $P_{\overline{\mu}} \in BMOA$.  It follows from
Theorem \ref{th} that in this case $\mu$ is a Hankel measure if and only if $\mu$ satisfies condition (\ref{a condition}).

\begin{theor}\label{positive}
Let $\mu$ be a positive Borel measure  on [0, 1).  Then the following conditions are equivalent.
\begin{enumerate}
  \item [(1)] $\mu$ is a Hankel measure.
  \item [(2)]  $\mu$ satisfies condition (\ref{a condition}), that is,
  $$
  \sup_{w\in \D} \left|\int_0^1 \frac{1-|w|^2}{(1-\overline{w}t)^2}d\mu(t)\right|<\infty.
  $$
  \item [(3)] $P_{\mu} \in \mathcal{Q}_p$ for some $0<p<\infty$.
  \item [(4)] $P_{\mu} \in \mathcal{Q}_p$ for all  $0<p<\infty$.
  \item [(5)]  $\mu[n]=O(\frac{1}{n})$.
\end{enumerate}
\end{theor}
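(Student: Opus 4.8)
The plan is to reduce everything to the structure of the Taylor coefficients of $P_{\mu}$. Since $\mu$ is a positive measure supported on the real interval $[0,1)$, we have $\overline{\mu}=\mu$, hence $P_{\overline{\mu}}=P_{\mu}$, and
$$
P_{\mu}(w)=\ind\f{1}{1-w\overline{z}}\,d\mu(z)=\int_{0}^{1}\f{1}{1-wt}\,d\mu(t)=\sum_{n=0}^{\infty}\mu[n]\,w^{n},\qquad w\in\D ,
$$
where the interchange of summation and integration is legitimate for $|w|<1$. Because $t\mapsto t^{n}$ is nonincreasing on $[0,1)$ and $\mu\geq 0$, the moment sequence $\{\mu[n]\}_{n\geq 0}$ is decreasing and nonnegative, so $P_{\mu}$ is precisely a function of the type covered by Theorem \ref{nonnegative nonincreasing}. (If $\mu([0,1))=\infty$ the identity above breaks down, but then $P_{\mu}(0)=\mu[0]=\infty$ and one checks directly that every one of (1)--(5) fails; so we may assume $\mu$ is finite.)

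With this in hand the equivalences follow by assembling the cited results. By part (3) of Theorem \ref{Xiao's theorem}, $\mu$ is a Hankel measure if and only if $P_{\overline{\mu}}=P_{\mu}\in BMOA=\Q_{1}$, so (1) is the case $p=1$ of conditions (3) and (4). By Theorem \ref{nonnegative nonincreasing}, for each fixed $p\in(0,\infty)$ one has $P_{\mu}\in\qp$ if and only if $\mu[n]=O(1/n)$; this single statement yields (3)$\Leftrightarrow$(4)$\Leftrightarrow$(5), and, combined with the previous sentence, (1)$\Leftrightarrow$(5). Finally, the proof of Theorem \ref{th} shows that condition (\ref{a condition}) holds for $\mu$ exactly when $P_{\overline{\mu}}=P_{\mu}\in\B$; since $\Q_{p}=\B$ for all $p\in(1,\infty)$ by \cite{AL}, applying Theorem \ref{nonnegative nonincreasing} with $p=2$ gives $P_{\mu}\in\B\Leftrightarrow P_{\mu}\in\Q_{2}\Leftrightarrow\mu[n]=O(1/n)$, i.e.\ (2)$\Leftrightarrow$(5). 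Chaining these equivalences proves the theorem.

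No serious obstacle is expected: once the moment identity $P_{\mu}(w)=\sum_{n}\mu[n]w^{n}$ is established, the entire substance is carried by Theorems \ref{Xiao's theorem}, \ref{nonnegative nonincreasing} and \ref{th} together with the collapse $\Q_{p}=\B$ for $p>1$. The only points that demand a little care are the verification that the moment sequence is nonincreasing (immediate), the Fubini step defining $P_{\mu}$ on $\D$ (routine for a finite measure), and the separate disposal of the degenerate infinite-mass case at the outset.
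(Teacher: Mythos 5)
Your proposal is correct and follows essentially the same route as the paper: expand $P_{\mu}$ into its moment series $\sum_n \mu[n]w^n$, apply Theorem \ref{nonnegative nonincreasing} to the nonincreasing nonnegative coefficients to get (3)$\Leftrightarrow$(4)$\Leftrightarrow$(5), and use Theorem \ref{Xiao's theorem}, the proof of Theorem \ref{th}, and the identifications $BMOA=\Q_1$, $\B=\Q_2$ to tie in (1) and (2). Only your parenthetical claim that all of (1)--(5) fail when $\mu([0,1))=\infty$ is slightly off (for $d\mu(t)=dt/t$ one has $\mu[n]=1/n$ for $n\geq 1$, so (5) holds asymptotically while the others fail), but this aside is dispensable since the Hankel-measure setting presupposes a finite (complex) measure.
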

\begin{proof}  If $\mu$ is a positive Borel measure  on [0, 1), then for $w\in \D$,
$$
P_{\overline{\mu}}(w)=P_\mu(w)=\sum_{n=0}^\infty \left(\int_0^1 t^n d\mu(t)\right) w^{n}=\sum_{n=0}^\infty \mu[n] w^n.
$$
Clearly, $\left\{ \mu[n] \right\}$ is a decreasing sequence with nonnegative numbers. By  Theorem \ref{nonnegative nonincreasing}, we
see that (3)$\Leftrightarrow $(4)$\Leftrightarrow$ (5). By the proof of Theorem \ref{th}, $P_{\overline{\mu}} \in \B$ if and only if
$\mu$ satisfies condition (\ref{a condition}).  From Theorem \ref{Xiao's theorem}, $P_{\overline{\mu}} \in BMOA$ if and only if
$\mu$ is Hankel measure. Bear in mind that $\B=\Q_2$ and $BMOA=\Q_1$.  Thus both (1) and (2) are equivalent to (5). The proof is complete.
\end{proof}

Let $\mu$ be a positive Borel measure  on [0, 1). Denote by $\H_\mu$ the Hankel matrix
$(\mu[n+k])_{n, k\geq 0}$. This matrix induces formally an  operator, denoted also by $\H_\mu$, on $H(\D)$ in
the following sense. For $f(z)=\sum_{n=0}^\infty a_n z^n\in H(\D)$, by multiplication of the matrix with the sequence
of Taylor coefficients of the function $f$,
$$
\{a_n\}_{n\geq 0} \mapsto \left\{\sum_{k=0}^\infty \mu[n+k] a_k\right\}_{n\geq 0},
$$
define
$$
\H_\mu (f)(z)=\sum_{n=0}^\infty \left(\sum_{k=0}^\infty \mu[n+k] a_k\right)z^n.
$$
If $\mu$ is the Lebesgue measure on [0, 1), the matrix $\H_\mu$ reduces to the classical Hilbert matrix
$\H=((n+k+1)^{-1})_{n, k\geq 0}$, which induces the classical Hilbert operator $\H$.
The Hankel matrix $\H_\mu$ acting on $H^2$ was studied in \cite{Pow, W}. For all Hardy spaces $H^p$, the operator $\H_\mu$ was investigated in
\cite{C, GM}. The Dirichlet type space $\mathcal{D}_\alpha$, $\alpha \in \mathbb{R}$, consists of functions $f(z)=\sum_{n=0}^\infty a_n z^n\in H(\D)$ for which
$$
\|f\|_{\mathcal{D}_\alpha}^2=\sum_{n=0}^\infty  (n+1)^{1-\alpha}|a_n|^2<\infty.
$$
The Hankel matrix $\H_\mu$ acting on $\mathcal{D}_\alpha$ spaces was considered in \cite{BW, Dia}.  It is worth referring to \cite{GM} for the recent development of
$\H_\mu$ acting on spaces of analytic functions.   The following theorem is from  the related literatures mentioned before.

\begin{otherth}\label{Hankel matrices}
Let $\mu$ be a positive Borel measure  on [0, 1). Suppose $1<p<\infty$ and $0<\alpha<2$.  Then the following conditions are equivalent.
\begin{enumerate}
  \item [(1)] $\mu$ is a Carleson  measure.
  \item [(2)]  $$
  \sup_{w\in \D} \int_0^1 \frac{1-|w|^2}{|1-\overline{w}t|^2}d\mu(t)<\infty.
  $$
  \item [(3)]  $\mu[n]=O(\frac{1}{n})$.
  \item [(4)]  $\mathcal{H}_\mu$ is bounded on $H^p$.
  \item [(5)]  $\mathcal{H}_\mu$ is bounded on $\mathcal{D}_\alpha$.
\end{enumerate}
\end{otherth}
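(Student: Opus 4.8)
The plan is to route all five conditions through the single box condition
$$
\mu([t,1))=O(1-t)\qquad(t\to 1^-),
$$
which I will call $(\star)$: the ``arithmetic'' conditions (1), (2), (3) are each elementarily equivalent to $(\star)$, and the operator statements (4) and (5) then follow from the integral representation of $\H_\mu$ combined with Carleson-embedding estimates (for the sufficiency) and with testing on normalized reproducing-kernel-type functions (for the necessity).

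\emph{Equivalence of (1), (2), (3).} For a positive measure supported on $[0,1)$ a Carleson box $S(I)$ either misses $[0,1)$ or, when $1\in I$, meets it in the interval $[1-|I|/(2\pi),1)$, so (1) is literally $(\star)$. Condition (2), whose denominator carries the modulus $|1-\overline{w}t|^2$, is the classical single-kernel description of Carleson measures, so (1)$\Leftrightarrow$(2) is standard; alternatively one checks it directly, taking $w=t\in(0,1)$ real and splitting $(0,1)$ dyadically about $t$. For (3), on the one hand $\mu[n]\ge(1-\tfrac1n)^n\,\mu([1-\tfrac1n,1))\gtrsim\mu([1-\tfrac1n,1))$, so (3)$\Rightarrow(\star)$; on the other hand the layer-cake identity $\mu[n]=\int_0^1\mu([\lambda^{1/n},1))\,d\lambda$ together with $(\star)$ gives $\mu[n]\le C\int_0^1(1-\lambda^{1/n})\,d\lambda=C/(n+1)$, so $(\star)\Rightarrow$(3).

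\emph{The operator on $H^p$, $1<p<\infty$.} Since $\mu[n+k]=\int_0^1 t^{n+k}\,d\mu(t)$, Fubini gives the representation $\H_\mu(f)(z)=\int_0^1 f(t)(1-tz)^{-1}\,d\mu(t)$ (here $\mu$ is finite, since $\H_\mu 1=P_\mu$ must be well defined). If $(\star)$ holds then $\mu$ is a Carleson measure, hence $H^p\hookrightarrow L^p(d\mu)$ on $[0,1)$; feeding this embedding into sharp estimates for the kernel $(1-tz)^{-1}$ and the area-function description of $H^p$ gives $\|\H_\mu f\|_{H^p}\lesssim\|f\|_{H^p}$ --- this is the argument of \cite{C,GM}, which I would quote rather than reproduce. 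For the converse I would test on $f_r(z)=(1-r^2)^{1/p}(1-rz)^{-2/p}$, which has $\|f_r\|_{H^p}\asymp 1$: evaluating the representation at $z=r$ gives $\H_\mu f_r(r)=(1-r^2)^{1/p}\int_0^1(1-rt)^{-2/p-1}\,d\mu(t)$, while the pointwise bound $|g(r)|\le\|g\|_{H^p}(1-r^2)^{-1/p}$ applied to $g=\H_\mu f_r$ forces $\int_0^1(1-rt)^{-2/p-1}\,d\mu(t)\lesssim(1-r)^{-2/p}$; restricting to $[r,1)$, where $1-rt\le 2(1-r)$, yields $\mu([r,1))\lesssim 1-r$, i.e. $(\star)$.

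\emph{The operator on $\mathcal{D}_\alpha$, $0<\alpha<2$.} The case $\alpha=1$ is the instance $p=2$ of the previous paragraph, since $\mathcal{D}_1=H^2$. For general $\alpha$ I would again use the integral representation together with the Carleson-type measure description of the embedding $\mathcal{D}_\alpha\hookrightarrow L^2(d\mu)$ and the coefficient norm of $\mathcal{D}_\alpha$: sufficiency holds because $(\star)$ makes $\mu$ such an embedding measure, and necessity follows by testing $\H_\mu$ on the $\mathcal{D}_\alpha$-normalized function $g_r(z)=(1-r^2)^{(2-\alpha)/2}(1-rz)^{-1}$, evaluating at $z=r$, using $|g(r)|\lesssim\|g\|_{\mathcal{D}_\alpha}(1-r^2)^{-\alpha/2}$, and arguing exactly as above to recover $(\star)$; the details are in \cite{BW,Dia}. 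The genuine obstacle, I expect, is the sufficiency direction for $p\neq2$ and $\alpha\neq1$: on Hilbert space one closes the estimate by duality against a single Carleson bound, but for general $p$ (resp. $\alpha$) one must control the singular integral $f\mapsto\int_0^1 f(t)(1-tz)^{-1}\,d\mu(t)$ by real-variable methods while keeping every constant uniform over all Carleson measures $\mu$, and that uniformity is the delicate point.
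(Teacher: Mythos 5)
Your proposal is sound, but note what the paper itself does with this statement: Theorem \ref{Hankel matrices} is presented as a known result assembled from the cited literature (\cite{Pow, W, C, GM, BW, Dia}) and the paper offers no proof at all. So there is nothing in the paper to match step by step; what you have written is a reasonable reconstruction. Your reduction of (1), (2), (3) to the box condition $\mu([t,1))=O(1-t)$ is correct and elementary: the identification of $S(I)\cap[0,1)$, the single-kernel (reproducing kernel) test for (2), the lower bound $\mu[n]\ge(1-\tfrac1n)^n\mu([1-\tfrac1n,1))$ and the layer-cake computation $\int_0^1\mu([\lambda^{1/n},1))\,d\lambda\le C/(n+1)$ all check out. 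The necessity arguments for (4) and (5) via the test functions $f_r$ and $g_r$ are also correct; one small point worth making explicit is why the integral representation $\H_\mu f(z)=\int_0^1 f(t)(1-tz)^{-1}d\mu(t)$ may be used there: since $f_r$ and $g_r$ have nonnegative Taylor coefficients and $\mu\ge0$, Tonelli justifies the interchange without any a priori Carleson hypothesis, so the testing argument is legitimate under the sole assumption that $\H_\mu$ is bounded. You are also right to single out the sufficiency directions for $H^p$ with $p\ne2$ and $\mathcal{D}_\alpha$ with $\alpha\ne1$ as the genuinely nontrivial content; those are exactly the theorems of \cite{C, GM} and \cite{Dia, BW} that the paper is quoting, and delegating them to those references is consistent with how the paper treats the statement. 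In short: your route (elementary equivalence of the measure-theoretic conditions, testing for necessity, citation for the deep sufficiency) proves more than the paper records, and is an acceptable fleshing-out of what the paper leaves entirely to the literature.
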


Combining  Theorem \ref{positive} with Theorem \ref{Hankel matrices}, we obtain immediately new characterizations   of a Hankel matrix $\H_\mu$ acting on Hardy spaces
and Dirichlet type spaces in terms of Hankel measures and $\qp$ functions.

\end{document}